\begin{document}

\newtheorem{Def}{Definition}[section]
\newtheorem{Lem}{Lemma}[section]
\newtheorem{Rem}{Remark}[section]
\newtheorem{Thm}{Theorem}[section]
\newtheorem{Cor}{Corollary}[Thm]
\newtheorem{sublemma}{Sub-Lemma}
\newtheorem{Prop}{Proposition}[section]
\newtheorem{Example}{Example}[section]
\newcommand{\g}[0]{\textmd{g}}
\newcommand{\pr}[0]{\partial_r}
\newcommand{\dif}{\mathrm{d}}
\newcommand{\bg}{\bar{\gamma}}
\newcommand{\md}{\rm{md}}
\newcommand{\cn}{\rm{cn}}
\newcommand{\sn}{\rm{sn}}
\newcommand{\seg}{\mathrm{seg}}

\title{A Remark on Regular Points of Ricci Limit Spaces}
\author{Lina Chen}
\date{\today}
\maketitle

\begin{abstract}
   Let $Y$ be a Gromov-Hausdorff limit of complete Riemannian n-manifolds with Ricci curvature bounded from below.
   A point in $Y$ is called $k$-regular, if its tangent is unique and is isometric to an $k$-dimensional Euclidean space.
   By \cite{B5}, there is $k>0$ such that the set of all $k$-regular point $\mathcal{R}_k$ has a full renormalized measure.
   An open problem is if $\mathcal{R}_l=\emptyset$ for all $l<k$? The main result in this paper asserts that
   if $\mathcal{R}_1\ne \emptyset$, then $Y$ is a one dimensional topological
   manifold. Our result improves the Handa's result \cite{Honda} that under the assumption that $1\leq \mathrm{dim}_H(Y)<2$.
\end{abstract}

\section{Introduction}
We call a length metric space, $(Y,d,p)$, a Ricci limit space if it is a Gromov-Hausdorff limit of a sequence of pointed complete Riemannian n-manifolds, $(M_i^n,p_i)$, with Ricci curvature $\mathrm{Ric}_{M_i^n}\geq -(n-1)$. In this note, we always assume that $Y$ is not a point. In papers \cite{B1,B2,B3,B4}, many important results about such limit spaces have been proven by Cheeger and Colding. By Gromov's compactness theorem, for any point $y\in Y$, given any sequence $\epsilon_j \rightarrow 0$, passing to a subsequence, $(Y,\epsilon_j^{-1}d, y)$ converges to a length space, $(T_y, 0_y)$, denoted by $(Y,\epsilon_j^{-1}d, y)\rightarrow (T_y, 0_y)$, and $(T_y, 0_y)$ is called a tangent cone at $y$ associated to $\epsilon_j\rightarrow 0.$ A tangent cone at $y$ may depend on a choice of $\epsilon_j\rightarrow 0$, even in a non-collapsing Ricci limit space $Y$ i.e., the volume of unit balls at $p_i$, $\mathrm{vol}(B_1(p_i))\geq v>0$ for all large $i$. Examples of non-collapsing Ricci limit spaces were constructed in \cite{B2},\cite{B6} whose tangent cones at a point in $Y$ are not isometric or even not homeomorphic. In a collapsing sequence, $\mathrm{vol}(B_1(p_i))\rightarrow 0$, tangent cones at a point in $Y$ may have different Hausdorff dimensions (\cite{B2}). If all tangent cones at a point, $y\in Y$, are isometric to $\mathbb{R}^k$, for some integer $k$, $y$ is called a k-regular point. The set of k-regular points in $Y$ is denoted by $\mathcal{R}_k$. A point in $Y$ is called singular if it is not a regular point. The set of singular points in $Y$ is denoted by $\mathcal{S}$. By Cheeger-Colding (\cite{B2}) and Colding-Naber (\cite{B5}), there exists an integer $k$ such that $\mathcal{R}_k$ is dense in $Y$ and has a
full renormalized measure (Lemma 2.2(2)). However, it is still open whether $\mathcal{R}_l\neq \emptyset,$ for some $l\neq k.$

The main result in this note asserts that if $\mathcal{R}_1 \neq \emptyset$, then $\bigcup_{l=2}^{n-1}\mathcal{R}_l$ is empty.
Furthermore, $Y$ is a one dimensional topological manifold with or without boundary.

\begin{Thm}
Let $Y$ be a Ricci limit space. If $\mathcal{R}_1 \neq \emptyset$ , then $Y$ is a one dimensional topological manifold
with or without boundary.
\end{Thm}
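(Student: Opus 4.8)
The plan is to reduce the theorem to the case already settled by Honda by showing that the \emph{essential dimension} of $Y$ equals $1$. Recall (Lemma~2.2) that there is a unique integer $k\ge 1$ for which $\mathcal{R}_k$ has full renormalized measure, that $\mathcal{R}_k$ is dense, that $\dim_H Y=k$, and that $\mathcal{R}_k$ is the \emph{only} dimension carrying positive renormalized measure; and Honda's theorem asserts precisely that a Ricci limit space with $1\le\dim_H Y<2$ is a one dimensional topological manifold with or without boundary. Hence it suffices to prove that $\mathcal{R}_1\ne\emptyset$ forces $k=1$, and for this it is in turn enough to show that $\mathcal{R}_1\ne\emptyset$ implies $\nu(\mathcal{R}_1)>0$: once a positive-measure set is $1$-regular, uniqueness of the essential dimension gives $\mathcal{R}_1=\mathcal{R}_k$ up to a null set and $k=1$, whence $\dim_H Y=1\in[1,2)$ and Honda applies. (Alternatively, once one knows that every point of $\mathcal{R}_1$ has a neighborhood that is an interval off at most one endpoint, one can bypass Honda: the set of points admitting an interval chart is nonempty, open, and closed in the connected space $Y$, hence all of $Y$.)

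So fix $y_0\in\mathcal{R}_1$. Uniqueness of the tangent cone gives, for every $\epsilon>0$, a radius $r_\epsilon>0$ so that for all $r<r_\epsilon$ the rescaled ball $(B_r(y_0),r^{-1}d,y_0)$ is $\epsilon$-Gromov--Hausdorff close to the unit interval $((-1,1),0)$. Choose $a_-,a_+\in B_r(y_0)$ with $d(y_0,a_\pm)\in(\tfrac{3}{4}r,r)$ and $d(a_-,a_+)>d(a_-,y_0)+d(y_0,a_+)-\epsilon r$; passing to the approximating manifolds $M_i^n$, this is an almost-line of definite length $\sim 2r$ through $B_r(y_0)$. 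By the Cheeger--Colding almost splitting theorem one obtains a function $u\colon B_{r/2}(y_0)\to\mathbb{R}$ which is a $\Psi(\epsilon\,|\,n)$-Gromov--Hausdorff approximation onto $(-r/2,r/2)$ and which, on the $M_i$, is almost harmonic with $|\nabla u|$ almost $1$ and with Hessian small in the integral sense. Moreover, since $B_{r/2}(y_0)$ is itself $\Psi$-close to an interval, the excess $e(z)=d(z,a_-)+d(z,a_+)-d(a_-,a_+)$ is $\Psi(\epsilon\,|\,n)r$-small on all of $B_{r/2}(y_0)$, so by the Abresch--Gromoll type excess estimate every point of $B_{r/2}(y_0)$ lies within $\Psi(\epsilon\,|\,n)r$ of a minimizing geodesic $\sigma$ from $a_-$ to $a_+$. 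I would then run the standard maximal-function and telescoping argument together with the $\epsilon$-regularity for splitting functions (Cheeger--Colding, Cheeger--Naber): at a definite fraction of points $z\in B_{r/4}(y_0)$ the rescalings of $u$ remain good splitting functions at arbitrarily small scales at $z$, and combining this with the uniform closeness of $B_{r/2}(y_0)$ to the one dimensional set $\sigma$ one concludes that the complementary factor degenerates, i.e.\ the unique tangent cone at $z$ is $\mathbb{R}$ and $z\in\mathcal{R}_1$. Hence $\nu(\mathcal{R}_1\cap B_{r/4}(y_0))>0$, which completes the reduction.

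The main obstacle is the last step of the previous paragraph: upgrading ``$\mathbb{R}$-splitting at $y_0$ at every small scale'' to ``$\mathbb{R}$-splitting at almost every nearby point at every of its small scales.'' The difficulty is genuine, because closeness to an interval at one scale, or the excess bound $e\le\Psi(\epsilon\,|\,n)r$, does not by itself persist to scales $s\ll r$; one must rule out microscopic ``fattening'' — a point Hausdorff-close to $\sigma$ at scale $r$ nonetheless developing a higher-dimensional tangent cone at some smaller scale. This is exactly where the lower Ricci bound must be used in an essential way (through the almost splitting theorem, the segment and excess inequalities, and the $\epsilon$-regularity for splitting maps), and it is the heart of the improvement over Honda's result, where the hypothesis $\dim_H Y<2$ trivializes this point. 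Everything else — the reduction via the essential dimension, and the passage from $k=1$ to the topological conclusion — is soft once this is in place.
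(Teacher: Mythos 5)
Your reduction is sound as far as it goes: if one can show $\nu(\mathcal{R}_1)>0$, then the uniqueness of the essential dimension (Lemma~2.2(2)) forces $k=1$, Remark~2.1(1) then gives $\mathcal{R}_l=\emptyset$ for $l\ge 2$, hence $\dim_H Y<2$, and Honda's theorem finishes the proof. The problem is that you do not actually prove $\nu(\mathcal{R}_1)>0$, and you say so yourself. The step you defer --- ``upgrading $\mathbb{R}$-splitting at $y_0$ at every scale to $\mathbb{R}$-splitting at nearby points at every of \emph{their} scales'' --- is not a technical loose end; it is the entire content of the theorem, and the tools you list cannot close it. Every ingredient in your second paragraph (GH-closeness of $B_r(y_0)$ to an interval at the fixed scale $r$, smallness of the excess $e$, the almost splitting function $u$, the Abresch--Gromoll estimate) is equally satisfied by a thin two-dimensional strip of width $\epsilon r$, in which \emph{every} point is $2$-regular. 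So no argument that uses only one-scale information of this kind can output $z\in\mathcal{R}_1$ for a positive-measure set of $z$; the $\epsilon$-regularity and telescoping machinery you invoke would at best control the behavior of $u$ at smaller scales, not rule out a second splitting direction appearing there. What distinguishes the true situation from the thin strip is that $y_0\in\mathcal{R}_1$ means the tangent cone is exactly $\mathbb{R}$ at \emph{all} scales, and this must be propagated to nearby points by some mechanism you have not supplied.

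The paper supplies exactly that mechanism, in two pieces. First, a purely metric Lemma (Lemma~2.1, proved by blowing up at the scale $s_i=d(x,\gamma_i)$ and applying the splitting theorem) shows that \emph{every} geodesic between suitable near-antipodal points $y^{\pm}$ must pass through $x$ itself --- not merely come $\Psi(\epsilon)r$-close to it. Second, Colding--Naber's theorem (Lemma~2.2(1), the sharp H\"older continuity of tangent cones along limit geodesics) guarantees a limit geodesic $\gamma$ from some such pair $(y^+,y^-)$ whose interior lies entirely in a single stratum $\mathcal{R}_l$; since $x$ lies on $\gamma$ and $x\in\mathcal{R}_1$, necessarily $l=1$, so an entire arc through $x$ consists of $1$-regular points. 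A nearest-point argument then shows $B_\tau(x)$ \emph{is} that arc. Your proposal cites Lemma~2.2 only for the essential dimension, and never uses part (1); without it (or an equivalent propagation statement) the claim $\nu(\mathcal{R}_1\cap B_{r/4}(y_0))>0$ is unsupported, and the proof has a genuine gap at its central step.
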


\begin{Cor} Let $Y$ be a Ricci limit space of Hausdorff-dimension $k\ge 2$. Then $\mathcal{R}_1=\emptyset$.
\end{Cor}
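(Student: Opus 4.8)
The plan is to deduce the theorem from Honda's result \cite{Honda} by showing that $\mathcal{R}_1\neq\emptyset$ forces $\dim_H Y<2$; the lower bound $\dim_H Y\ge 1$ is automatic, since $Y$ is a length space and not a point, hence contains a nondegenerate geodesic. Equivalently, writing $k$ for the integer of \cite{B2,B5} such that $\mathcal{R}_k$ has full renormalized measure, I must show $k=1$. Fix $y_0\in\mathcal{R}_1$ and a renormalized limit measure $\nu$, so that $\mathrm{supp}\,\nu=Y$ and $\nu$ obeys the limit Bishop--Gromov inequality. First I would record the uniform \emph{measured} regularity of $Y$ at $y_0$: since the tangent cone at $y_0$ is unique and isometric to $\mathbb{R}$, a standard compactness argument gives, for every $\eta>0$, a radius $r_\eta>0$ so that for all $r\le r_\eta$ the rescaled pointed space $(B_r(y_0),r^{-1}d,y_0)$ is $\eta$-Gromov--Hausdorff close to $(B_1(0)\subset\mathbb{R},0)$; carrying along $\nu$ (suitably renormalized) and invoking the Cheeger--Colding fact \cite{B2} that a renormalized limit measure on a Euclidean factor is a constant multiple of Lebesgue measure, one moreover gets that the rescaled renormalized measure is $\eta$-close to $\tfrac12\mathcal H^1$ on $B_1(0)$. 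Thus at $y_0$ the space is ``$\eta$-close to $(\mathbb R,\tfrac12\mathcal H^1)$ at every scale $\le r_\eta$''.

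The heart of the argument is to propagate this from $y_0$ to a whole ball $B_{r_0}(y_0)$. Because the tangent cone at $y_0$ is $\mathbb{R}$, the space is (Gromov--Hausdorff close to) an interval near $y_0$, so for every $w$ close enough to $y_0$ one can find a minimal geodesic $\sigma$ having both $w$ and $y_0$ in its interior, with the distance from $w$ (and from $y_0$) to the endpoints of $\sigma$ bounded below independently of $w$. Applying the sharp, scale-invariant Hölder continuity of rescaled tangent balls along minimal geodesics of Colding--Naber \cite{B5} --- anchored at the interior point $y_0$, where by the previous step the rescaled measured balls are, at every small scale, as close as we wish to $(B_1(0)\subset\mathbb R,0,\tfrac12\mathcal H^1)$ --- one obtains: for every $\delta>0$ there are $r_0,\rho_0>0$ so that for all $w\in B_{r_0}(y_0)$ and all $\rho\le\rho_0$, the rescaled pointed space $(B_\rho(w),\rho^{-1}d,w)$ together with its renormalized measure is $\delta$-close to $(B_1(0)\subset\mathbb R,0,\tfrac12\mathcal H^1)$.

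Now fix $\delta$ small. The measured closeness at all scales forces $\nu$ to be (almost) Ahlfors $1$-regular on $B_{r_0/2}(y_0)$, i.e.\ $c\rho\le\nu(B_\rho(w))\le C\rho$ for all $w\in B_{r_0/2}(y_0)$ and all small $\rho$, whence $\dim_H B_{r_0/2}(y_0)=\dim_H(\mathrm{supp}\,\nu\cap B_{r_0/2}(y_0))=1$ (alternatively one invokes the Cheeger--Colding Reifenberg theorem to get a bi-Hölder chart onto an interval with exponent close to $1$). Since $\nu(B_{r_0/2}(y_0))>0$ (as $\mathrm{supp}\,\nu=Y$) and $\mathcal R_k$ has full $\nu$-measure, $B_{r_0/2}(y_0)$ meets $\mathcal R_k$, so $k\le\dim_H B_{r_0/2}(y_0)=1$, i.e.\ $k=1$. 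Hence $\mathcal R_1=\mathcal R_k$ has full renormalized measure and $\dim_H Y=1$, so $\bigcup_{l=2}^{n-1}\mathcal R_l=\emptyset$, and by \cite{Honda} (or directly: each point with tangent cone $\mathbb R$ has, by the Reifenberg theorem, a neighborhood bi-Hölder homeomorphic to an open interval, while one checks that every remaining point of $Y$ has tangent cone a ray $[0,\infty)$ and that such points form a discrete set, giving the boundary) $Y$ is a one dimensional topological manifold with or without boundary.

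The step I expect to be the main obstacle is the propagation to a neighborhood. A priori the local ``dimension'' of a Ricci limit space can jump up under blow-up --- this is precisely why it is open in general whether $\mathcal R_\ell=\emptyset$ for $\ell<k$ --- so one genuinely needs the quantitative, scale-invariant Hölder estimate of \cite{B5}, used in its \emph{measured} form, to rule out such jumps near $y_0$; and one must check that $y_0$ can legitimately serve as an \emph{interior} anchor for geodesics reaching all nearby $w$, so that the Colding--Naber estimate does not degenerate exactly at the one point where the structure is fully understood, and that the measured closeness is obtained simultaneously at all small scales at every such $w$. Everything after this step is essentially bookkeeping together with the cited structure and Reifenberg theorems of Cheeger--Colding and Honda's one-dimensional result.
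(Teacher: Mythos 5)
Your overall logical frame (contrapositive: $\mathcal{R}_1\neq\emptyset\Rightarrow\dim_H Y<2$, then compare with $k\ge 2$) is sound, and the bookkeeping at the end (full measure of $\mathcal{R}_k$, Honda's equivalence $\bigcup_{l\ge 2}\mathcal{R}_l=\emptyset\Leftrightarrow 1\le\dim_H Y<2$) would go through. But the step you yourself flag as the main obstacle is a genuine, unfilled gap, and it is exactly the point where the paper does all of its work. You assert that ``for every $w$ close enough to $y_0$ one can find a minimal geodesic $\sigma$ having both $w$ and $y_0$ in its interior,'' justified only by the fact that balls around $y_0$ are Gromov--Hausdorff close to intervals. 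GH-closeness to an interval at scale $r$ only tells you that $w$ lies within $\sim\epsilon r$ of a geodesic between nearly antipodal points $x_r^{\pm}$, and that such a geodesic passes within $\sim\epsilon r$ of $y_0$; it does not put either point \emph{on} the geodesic. Upgrading ``near'' to ``on'' is precisely the content of the paper's Lemma 2.1 (blow up at the scale $s_i=d(y_0,\gamma_i)$ of the miss, obtain a line in the tangent cone at distance $1$ from the base point, and contradict $y_0\in\mathcal{R}_1$ via the Cheeger--Colding splitting theorem) together with the branching argument in the first step of the proof of Theorem 1.1 (blow up at the foot point $z$ of $w$ on $\gamma$). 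Without this, the Colding--Naber H\"older continuity cannot be anchored: you have no limit geodesic whose interior contains both $y_0$ and an arbitrary nearby $w$ (note also that Colding--Naber requires \emph{limit} geodesics, which Lemma 2.2(1) only supplies between almost every pair of points, another detail your propagation step elides).

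Moreover, once the missing step is supplied, your remaining machinery becomes unnecessary: the paper shows directly that these two blow-up arguments force $B_\tau(y_0)$ to be \emph{isometric} to an interval $(-\tau,\tau)$, and then that $Y$ is a single maximal geodesic, so $\dim_H Y=1$ and the Corollary is immediate from Theorem 1.1 by contraposition --- no measured convergence, Ahlfors regularity, or quantitative H\"older propagation is needed. So the proposal is not a correct alternative proof as written: its central propagation step presupposes (and would have to be proved by) the very splitting/branching argument that constitutes the paper's proof.
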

Let $\nu$ denote a renormalized measure (see section 1 of \cite{B2} for the definition of renormalized measure).
A geodesic in $Y$ is called a limit geodesic if it is the limit of geodesics in $M_i$.
\begin{Cor} Let $Y$ be a Ricci limit space such that the renomalized measure $\nu(Y\setminus \mathcal{R}_2)=\emptyset$. Then, for any $x, y \in Y$,
and any limit geodesic $\gamma$ connecting $x$ and $y$, interior points of $\gamma$ are all in $\mathcal{R}_2$ or
all in the singular set $\mathcal{S}$.
\end{Cor}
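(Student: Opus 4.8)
The plan is to reduce the assertion to a connectedness argument along $\gamma$, powered by the Cheeger--Colding splitting theorem and by the sharp Hölder continuity of tangent cones of Colding--Naber. I begin with structural reductions. Since $\nu$ is a positive measure concentrated on a single regular stratum (Lemma 2.2(2)) and $\nu(Y\setminus\mathcal R_2)=0$, the essential dimension of $Y$, and hence $\dim_H Y$, equals $2$; thus $\mathcal R_1=\emptyset$ by the previous Corollary, while $\mathcal R_l=\emptyset$ for $l\ge 3$ because no point can have local Hausdorff dimension exceeding $\dim_H Y$. Consequently every interior point of $\gamma$ lies in $\mathcal R_2\cup\mathcal S$, so it is enough to show that $A:=\{\,t\in(0,L):\gamma(t)\in\mathcal R_2\,\}$ is empty or all of $(0,L)$. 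Moreover, if $t\in(0,L)$ then blowing $Y$ up at $\gamma(t)$ turns $\gamma$ into a line through the base point, so by splitting every tangent cone at $\gamma(t)$ is isometric to $\mathbb R\times C$ for a metric cone $C$, and $\gamma(t)\in\mathcal R_2$ precisely when the rescaled balls $B_r(\gamma(t),r^{-1}d)$ converge to the unit ball $B_1(\mathbb R^2)$ as $r\to0$. Finally I would invoke a top-dimensional $\epsilon$-regularity: there is $\epsilon_0(n)>0$ so that if $B_r(\gamma(t),r^{-1}d)$ is $\epsilon_0$-close to $B_1(\mathbb R^2)$ for a single scale $r$, then the same holds for all $r'\le r$, hence $\gamma(t)\in\mathcal R_2$; this rests on almost-monotonicity of the relevant volume/density ratio together with the maximality of the dimension $2$.

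Next I would prove that $A$ is open and closed in $(0,L)$; connectedness of $(0,L)$ then gives the dichotomy. The tool is the Colding--Naber estimate, applied to the approximating minimal geodesics $\gamma_i\subset M_i$ and passed to the limit: for parameters $s,t$ ranging over a compact subinterval $[a,b]\subset(0,L)$ and for all small $r$, the rescaled balls $B_r(\gamma(s),r^{-1}d)$ and $B_r(\gamma(t),r^{-1}d)$ are Gromov--Hausdorff close, with error controlled by a fixed power of $|s-t|/r$. For openness, let $t_0\in A$: for every small $r$ the rescaled ball at $\gamma(t_0)$ is $o(1)$-close to $B_1(\mathbb R^2)$; if $|t-t_0|$ is small compared with a suitable power of $r$, the estimate transfers this to $\gamma(t)$ at scale $r$, and $\epsilon$-regularity upgrades it to $\gamma(t)\in\mathcal R_2$; choosing $r$ a fixed small power of $|t-t_0|$ covers a whole neighbourhood of $t_0$. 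For closedness, let $t_i\to t_\infty\in(0,L)$ with $t_i\in A$: for each fixed small $r$ and $i$ large (so $|t_i-t_\infty|\ll r$), $B_r(\gamma(t_\infty),r^{-1}d)$ is close to $B_r(\gamma(t_i),r^{-1}d)$, which is close to $B_1(\mathbb R^2)$ since $\gamma(t_i)\in\mathcal R_2$ and, by the estimate on a subinterval containing all $t_i$, the Euclidean scale is uniform along that subarc; letting $r\to0$ gives $T_{\gamma(t_\infty)}=\mathbb R^2$.

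The main obstacle I anticipate is balancing the three small quantities --- the displacement $|s-t|$, the comparison scale $r$, and the Euclidean defect at scale $r$ --- against the degeneration of the Colding--Naber estimate near the endpoints of $\gamma$: one must calibrate $r$ as the right power of $|s-t|$ so that the transferred defect still tends to $0$ uniformly over the relevant range of scales, and one must justify the top-dimensional $\epsilon$-regularity in the possibly collapsed setting by identifying the correct monotone quantity and using that $2$ is the maximal dimension of a tangent cone of $Y$. Granting these quantitative points, the openness and closedness of $A$ follow, and the conclusion --- that the interior of any limit geodesic lies entirely in $\mathcal R_2$ or entirely in $\mathcal S$ --- is immediate.
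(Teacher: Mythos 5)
Your reduction to the dichotomy ``interior points lie in $\mathcal{R}_2\cup\mathcal{S}$'' and your connectedness skeleton match the spirit of the paper, but the engine you propose for the open--closed argument has a genuine gap: the ``top-dimensional $\epsilon$-regularity'' at the essential dimension $2$ in a collapsed limit. The classical statement of this type (closeness of $B_r(x)$ to a Euclidean ball at one scale propagates to all smaller scales) rests on Colding's volume continuity and the monotonicity of $\mathrm{vol}(B_r)/r^n$, and it requires non-collapsing together with the model $\mathbb{R}^n$ having the \emph{same} dimension $n$ as the approximating manifolds. Here $Y$ is a limit of $n$-manifolds with $n$ arbitrary and essential dimension $2$, the measure in play is only a renormalized limit measure, and no monotone density ratio adapted to dimension $2$ is available; you acknowledge that one ``must identify the correct monotone quantity,'' but no such quantity is known in this setting, and the statement you need is not a consequence of the tools the paper uses (Theorems 1.2, Lemma 2.2). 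Without it, the Colding--Naber estimate $d_{GH}(B_r(\gamma(s)),B_r(\gamma(t)))\lesssim r(|s-t|/r)^{\alpha}$ is useless for fixed $s\neq t$ as $r\to 0$, so neither your openness nor your closedness step closes: you cannot pass from ``close to $\mathbb{R}^2$ at one scale'' to ``regular.'' (A smaller issue: your claim that $\mathcal{R}_l=\emptyset$ for $l\ge 3$ via $\dim_H Y=2$ presupposes $\dim_H Y\le 2$, which does not obviously follow from $\nu(Y\setminus\mathcal{R}_2)=0$; the paper instead gets this directly from Remark 2.1(1). Also, tangent cones of collapsed limits need not be metric cones, so the factor $C$ in your $\mathbb{R}\times C$ should only be assumed to be a length space.)

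The paper circumvents the need for $\epsilon$-regularity by first \emph{classifying} the tangent cones at interior points of $\gamma$: each such point is in $\mathcal{WE}_1$, and combining Honda's Proposition 3.37 (renormalized limit measures on tangent cones give them full measure in $\mathcal{R}_2$) with Theorem 1.1 applied to the tangent cone itself, every tangent cone at an interior point is isometric either to $\mathbb{R}^2$ or to the half-plane. These two models are self-similar and a definite Gromov--Hausdorff distance apart at every scale, so the Hölder continuity of Colding--Naber separates the corresponding sets of parameters into relatively open and closed pieces without any need to propagate regularity downward in scale. If you want to salvage your approach, you should replace the $\epsilon$-regularity step by this rigidity statement; as written, the proposal does not prove the corollary.
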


 Theorem 1.1 improves Theorem 1.1 in \cite{Honda}, where Honda proved that if $1\leq \mathrm{dim}_H(Y)<2$,
  then $Y$ is a one dimensional topological manifold. Note that $1\leq \mathrm{dim}_H(Y)<2$ is equivalent to that $\bigcup_{l=2}^{n-1}\mathcal{R}_l = \emptyset$ (\cite{B3}, \cite{Honda}), both imply that $\mathcal{R}_1 \neq \emptyset$. We noticed that in \cite{Honda}, Honda claimed (page 3 in \cite{Honda}) that in a paper under preparation, he can proved Theorem 1.1.

 Our proof uses the following splitting theorem of Cheeger-Colding (\cite{B1}),
 \begin{Thm}(\cite{B1})
 Let $(Y,p)$ be a Gromov-Hausdorff limit of a sequence, $\{(M_i^n, p_i)\},$ with $\mathrm{Ric}_{M_i^n}\geq -(n-1)\delta_i, \delta_i\rightarrow 0.$ If $Y$ contains a line, then $Y$ splits isometrically with a $\Bbb R^1$-facor i.e., $Y=\mathbb{R}^1\times X$ for some length metric space $X$.
 \end{Thm}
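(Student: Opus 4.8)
The plan is to establish the statement by working on the approximating manifolds $M_i^n$, proving a quantitative ``almost splitting'' there, and then passing to the Gromov--Hausdorff limit; this is the Cheeger--Colding strategy, which replaces the maximum-principle step of the classical Cheeger--Gromoll splitting theorem (valid only on smooth spaces) by integral estimates that survive the limit. Since $Y$ contains a line $\gamma:\mathbb{R}\to Y$, I first pull it back: for each large $R$ and each base point I choose minimizing geodesics $\sigma_i$ in $M_i$ whose endpoints $\sigma_i(\pm R)$ converge to $\gamma(\pm R)$. Associated to these endpoints I form the excess function
\[
e_i(x)=d(x,\sigma_i(R))+d(x,\sigma_i(-R))-d(\sigma_i(R),\sigma_i(-R)),
\]
which is nonnegative and vanishes on $\sigma_i$, together with the Busemann-type functions $b_i^{\pm}(x)=R-d(x,\sigma_i(\pm R))$, so that $b_i^{+}+b_i^{-}=-e_i$.

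The core of the argument is to show that, on a fixed ball $B_r(p_i)$ and as $\delta_i\to0$ and $R\to\infty$, the function $b_i:=b_i^{+}$ becomes infinitesimally close to a distance-preserving projection onto a line, in an $L^2$ sense. Three Cheeger--Colding ingredients drive this. First, the Abresch--Gromoll excess estimate: under $\mathrm{Ric}\ge-(n-1)\delta_i$ it bounds $e_i$ on $B_r(p_i)$ by a quantity tending to $0$ with $\delta_i$ and $1/R$. Second, the Laplacian comparison theorem, which makes each $b_i^{\pm}$ almost subharmonic, $\Delta b_i^{\pm}\ge-\eta_i$ with $\eta_i\to0$; combined with the excess bound and the gradient estimate for almost-harmonic functions this yields $\frac{1}{\mathrm{vol}(B_r(p_i))}\int_{B_r(p_i)}\bigl|\,|\nabla b_i|-1\,\bigr|\to0$. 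Third, the integrated Bochner formula: testing
\[
\tfrac12\Delta|\nabla b_i|^2=|\mathrm{Hess}\,b_i|^2+\langle\nabla b_i,\nabla\Delta b_i\rangle+\mathrm{Ric}(\nabla b_i,\nabla b_i)
\]
against a cutoff supported on $B_{2r}(p_i)$ and using the segment inequality to absorb the first-order and Ricci terms, I obtain the crucial Hessian estimate
\[
\frac{1}{\mathrm{vol}(B_r(p_i))}\int_{B_r(p_i)}|\mathrm{Hess}\,b_i|^2\;\longrightarrow\;0 .
\]

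With these estimates I build the splitting. The $L^2$-smallness of $|\mathrm{Hess}\,b_i|$ together with $|\nabla b_i|\approx1$ forces the gradient flow $\Phi_i^{t}$ of $b_i$ to be an almost-isometric one-parameter family on large balls, so that $(b_i,\pi_i):B_r(p_i)\to\mathbb{R}\times X_i$, with $X_i$ a level set of $b_i$ and $\pi_i$ the nearest-point projection, is an $\epsilon_i$-Gromov--Hausdorff approximation with $\epsilon_i\to0$. Passing to the limit, the $b_i$ converge to a function $b$ on $Y$ with $|\nabla b|=1$ and vanishing Hessian in the limiting weak sense, and the approximations converge to an isometry $Y\cong\mathbb{R}\times X$; because a line is available through every point of $\gamma$ and at every scale, the Euclidean factor is genuinely all of $\mathbb{R}^{1}$.

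The step I expect to be the main obstacle is precisely this passage from the infinitesimal (integral) estimates to a genuine metric splitting: $L^2$-smallness of the Hessian does not by itself imply that the gradient flow preserves distances, and one must control how the small errors accumulate along flow lines of length comparable to $R$. This is where the full Cheeger--Colding apparatus---the segment inequality, the gradient estimate for almost-harmonic functions, and the stability of the Dirichlet energy and Laplacian under measured Gromov--Hausdorff convergence---must be deployed with care, and where the hypothesis $\delta_i\to0$ (rather than a fixed negative lower bound) is essential, since it is exactly what drives the comparison error terms $\eta_i$ and the excess to zero in the limit.
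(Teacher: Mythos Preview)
The paper does not prove this theorem at all: it is quoted verbatim as a result of Cheeger--Colding \cite{B1} and used as a black box in the proof of Lemma~2.1 and Theorem~1.1. There is therefore no ``paper's own proof'' to compare against.

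That said, your outline is a faithful sketch of the Cheeger--Colding argument from \cite{B1}. One technical point worth flagging: in the actual proof one does not apply the Bochner formula directly to the Lipschitz function $b_i$, which is not smooth; instead one first replaces $b_i$ by its harmonic replacement $h_i$ (solving $\Delta h_i=0$ on a ball with $h_i=b_i$ on the boundary), uses the Laplacian comparison and excess estimates to show $h_i$ is $C^0$- and $W^{1,2}$-close to $b_i$, and only then runs the Bochner argument on the smooth function $h_i$ to obtain the Hessian bound. Your sketch elides this step, and without it the integrated Bochner identity as written is not available.
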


Using Theorem 1.2, we show that for $x\in \mathcal{R}_1$, there exist $\epsilon = \epsilon(x)>0, r(\epsilon)>0,$ such that if $r\leq r(\epsilon)$, $y^{\pm}\in A_{r-2\epsilon r, r+2\epsilon r}(x)=B_{r+2\epsilon r}(x)\setminus B_{r-2\epsilon r}(x),$  with $d(y^{+},x)+d(y^{-},x)-d(y^{-},y^{+})\leq 7\epsilon r,$ then $x$ lies in every geodesic connecting $y^{-}$ and $y^{+}$ (Lemma 2.1). Together with some applications of the sharp H\"older continuity on tangent cones obtained by Colding-Naber (Lemma 2.2)(\cite{B5}) and a similar argument to \cite{Honda}, theorem 1.1 can be proved.
\begin{Rem}
In \cite{B9}, Kitabeppu and Lakzian proved a similar result of Theorem 1.1 for a more general metric spaces (including Ricci limit spaces) via a complicated argument, because
the continuity of tangent cones along the interior of geodesic may not hold.
Our proof is simple but may not extend to their case. Nevertheless, Lemma 2.1 still holds in the setting of \cite{B9}.
\end{Rem}

The author would like to thank professor Shouhei Honda for his interests in this paper and for his useful suggestion that leads to Corollary 1.1.2. The author is grateful to Professors S. Honda and Tapio Rajala for bring her attention to \cite{B9}.

\section{Local structure of 1-regular points}
Let $x\in \mathcal{R}_1$. By definition, for every $\epsilon > 0$, there is $r(\epsilon)>0$ such that
for any $r\leq r(\epsilon)$,
$$d_{GH}(B_r(x),B_r^1(0))\leq \epsilon r,$$
where $B_r^1(0)\subset \mathbb{R}^1.$ Let $e_1$ be the standard basis in $\mathbb{R}^1$. Hence for every $\epsilon > 0$,
$r\leq r(\epsilon)$, there are $x_r^{+}, x_r^{-}\in B_r(x)$ such that $d(x_r^{\pm},\pm re_1)\leq \epsilon r$.
Consequently,
$$|d(x_r^{\pm},x)-r|\leq \epsilon r,$$
$$d(x_r^{+},x)+d(x_r^{-},x)-d(x_r^{+},x_r^{-})\leq 3\epsilon r.$$
\begin{Lem} Let $(Y ,d)$, $x\in \mathcal{R}_1, x_r^{+}, x_r^{-}$ be as in the above. Then there exists $\epsilon_0 > 0$
such that for any $\epsilon \leq \epsilon_0$, $r\leq r(\epsilon)$,  and any $$(y^{+},y^{-})\in
B_{\epsilon r}(x_r^{+})\times B_{\epsilon r}(x_r^{-}),$$ any geodesic $\gamma$ from $y_1$ to $y_2$ passing through $x$.
\end{Lem}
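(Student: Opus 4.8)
The plan is to argue by contradiction, blowing up at $x$ at two successive scales and using that every tangent cone at $x\in\mathcal R_1$ is isometric to $\mathbb R^1$; the splitting theorem (Theorem~1.2) enters at the second blow-up to identify the limit through the line it contains, which is presumably the route indicated in the introduction.

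First record the elementary distance bounds. From $y^\pm\in B_{\epsilon r}(x_r^\pm)$ and the two displayed inequalities preceding the lemma one gets, for $\epsilon$ small, $(1-2\epsilon)r\le d(y^\pm,x)\le(1+2\epsilon)r$ and $d(y^+,x)+d(y^-,x)-d(y^+,y^-)\le C\epsilon r$ for a universal $C$, and any geodesic $\gamma$ from $y^+$ to $y^-$ stays in $\overline{B_{(2+C\epsilon)r}(x)}$, since $2d(x,w)\le d(x,y^+)+d(x,y^-)+d(y^+,y^-)$ for $w\in\gamma$. We may also take $r(\epsilon)\le 1$ and small enough that $d_{GH}(B_s(x),B_s^1(0))\le\epsilon s$ for all $s\le 3r(\epsilon)$, so that the first blow-up below takes place where Gromov--Hausdorff closeness holds. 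Now assume the lemma fails: there are $\epsilon_j\downarrow 0$, radii $r_j\le r(\epsilon_j)$, points $y_j^\pm\in B_{\epsilon_j r_j}(x_{r_j}^\pm)$, and geodesics $\gamma_j$ from $y_j^+$ to $y_j^-$ with $x\notin\gamma_j$; let $z_j\in\gamma_j$ be a point closest to $x$ and put $\rho_j:=d(x,z_j)>0$.

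Step~1 (rescale by $r_j^{-1}$): on $\overline{B_3(x)}$ the rescaled spaces are $\epsilon_j$-close to $\overline{B_3^1(0)}$, so along a subsequence $\gamma_j$ converges to a geodesic in $\mathbb R^1$ joining $e_1$ to $-e_1$; this geodesic is unique, hence is the segment $[-e_1,e_1]$, which contains $0$ (the image of $x$). Therefore $z_j$ (rescaled) converges to the closest point of that segment to $0$, namely $0$; i.e. $\rho_j/r_j\to 0$, and in particular $\rho_j\to 0$. Step~2 (rescale by $\rho_j^{-1}$): since $x\in\mathcal R_1$ and $\rho_j\to 0$, on every fixed ball $(Y,\rho_j^{-1}d,x)\to(\mathbb R^1,0)$. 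In the rescaled metric $d(x,z_j)=1$, while $d(z_j,y_j^\pm)\ge d(x,y_j^\pm)-d(x,z_j)\ge(1-2\epsilon_j)r_j/\rho_j-1\to\infty$, so $\gamma_j$ has arc-length $\to\infty$ on each side of $z_j$; let $\sigma_j\subset\gamma_j$ be the subarc of arc-length $9$ on each side of $z_j$. By the triangle inequality $\sigma_j\subset\overline{B_{10}(x)}$, it is a geodesic of length $18$ with midpoint $z_j$, and $z_j$ minimizes $d(x,\cdot)$ along it. Along a subsequence $\sigma_j\to\sigma_\infty$, a length-$18$ segment of $\mathbb R^1$ whose midpoint $z_\infty=\lim z_j$ satisfies $d(0,z_\infty)=1$ and minimizes $d(0,\cdot)$ on $\sigma_\infty$. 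But the midpoint of a nondegenerate segment in $\mathbb R$ minimizes the distance to $0$ only when it equals $0$, contradicting $d(0,z_\infty)=1$. This contradiction proves the lemma. (Equivalently, in Step~2 one may blow up at $z_j$: then $\gamma_j$ passes through the basepoint and extends with arc-length $\to\infty$ on both sides, so the limit contains a line and, by Theorem~1.2, splits off an $\mathbb R^1$-factor containing that line; tracking the marked point $x$, which sits at distance $1$ from the basepoint, yields the same contradiction.)

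The crux is Step~2. The first blow-up only shows that the limiting geodesic passes through the image of $x$, i.e. $d(x,\gamma_j)=o(r_j)$, not that $x\in\gamma_j$; one must renormalize at exactly the scale $\rho_j=d(x,\gamma_j)$ at which the conclusion fails, where $\gamma_j$ is arbitrarily long yet remains at distance $\ge 1$ from $x$ while touching distance exactly $1$, which is impossible in $\mathbb R^1$. The remaining work is routine: arranging the various rescaled balls to lie inside the region of Gromov--Hausdorff closeness, and checking that geodesics, their lengths, midpoints, and closest points to $x$ all pass correctly to the Gromov--Hausdorff limit.
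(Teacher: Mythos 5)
Your proof is correct and follows the same basic strategy as the paper's: argue by contradiction, rescale by $\rho_j=d(x,\gamma_j)$ (the paper's $s_i$), and read off a contradiction from the blow-up limit at $x$, where the geodesic is very long yet stays at distance exactly $1$ from the basepoint. Two differences are worth noting. First, your Step~1 (rescaling by $r_j^{-1}$ to conclude $\rho_j/r_j\to 0$) supplies a justification the paper leaves implicit: the paper only records $s_i\leq\min\{d(x,y_i^{+}),d(x,y_i^{-})\}$, hence $s_i\to 0$, and then asserts that $s_i^{-1}\gamma_i$ converges to a \emph{line}, which requires precisely $s_i/r_i\to 0$ so that the rescaled geodesic becomes arbitrarily long on both sides of its closest point; your first blow-up proves this. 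Second, in the final step the paper invokes the splitting theorem (Theorem 1.2) to write the blow-up limit as $\mathbb{R}\times W$ with $W$ not a point, while you use directly that every tangent cone at $x\in\mathcal{R}_1$ is $\mathbb{R}$ and that the midpoint of a nondegenerate segment in $\mathbb{R}$ cannot be the point closest to $0$ while sitting at distance $1$ from it; your parenthetical variant (blowing up at $z_j$ and splitting off the line) is essentially the paper's argument. Both routes are sound, and your version is the more complete write-up.
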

\begin{proof}
Arguing by contradiction, assume that there are $ \epsilon_i \rightarrow 0, r_i \rightarrow 0$, $(y_i^{+}, y_i^{-})\in B_{\epsilon_i r_i}(x_{r_i}^{+})\times B_{\epsilon_i r_i}(x_{r_i}^{-})$, and a geodesic, $\gamma_i$, from $y_i^{+}$ to $y_i^{-}$ such that $x\notin \gamma_i$.

By the triangle inequality,
$$|d(y_i^{+}, x)-r_i|\leq 2\epsilon_i r_i, |d(y_i^{-}, x)-r_i|\leq 2\epsilon_i r_i,$$
$$d(y_i^{+}, x)+d(y_i^{-},x) - d(y_i^{+}, y_i^{-})\leq 7\epsilon_i r_i.$$
Passing to a subsequence, we may assume that
$$(Y, r_i^{-1}d,x)\rightarrow (\mathbb{R},0),$$
and $y_i^{\pm}\rightarrow \pm 1.$

By assumption, $s_i = d(x,\gamma_i)> 0$. From the above there is $\gamma_i(t_i)$ such that
$d(x,\gamma_i)=d(x,\gamma_i(t_i))$ and
$$s_i\leq \min\{d(x, y_i^{+}),d(x, y_i^{-})\}.$$
Thus $s_i\rightarrow 0,$ as $i\rightarrow \infty$.
Without loss of generality, we can assume that
$$(Y, s_i^{-1}d, x)\rightarrow (T_x, 0_x).$$
Observe that $s_i^{-1}\gamma_i$ converges to a line in $T_x$ which has distance $1$ from $0_x$.
By Theorem 1.2 (\cite{B1}), $T_x$ is isometric to $\mathbb{R}\times W$, and because
$0_x$ is not on the line, $W$ is not a point; a contradiction to $x\in \mathcal{R}_1$.
\end{proof}
To conclude Theorem $1.1$ from Lemma 2.1, we will also need the following results of Colding-Naber (\cite{B5}).

\begin{Lem}(\cite{B5})
 If $Y$ is a Ricci limit space, then
 \begin{enumerate}
\item[(1)]
for $\nu \times \nu$ almost every pair $(a_1, a_2)\in A_1 \times A_2$, where $A_1$
and $A_2$ are subset of $Y$ that contained in a bounded ball, there exists a limit
geodesic from $a_1$ to $a_2$ whose interior lies in some $\mathcal{R}_l$,
$l$ is an integer;
  \item[(2)]
  There is an integer $k$, such that $\nu(Y\setminus \mathcal{R}_k)=0$.

   \end{enumerate}
\end{Lem}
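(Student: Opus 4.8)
The plan is to deduce both parts from the structure theory of Cheeger--Colding together with the sharp Hölder estimate of Colding--Naber \cite{B5}, treating part (2) first since part (1) refines it. The overall division of labor is: the full-measure of the \emph{total} regular set $\bigcup_l \mathcal{R}_l$ comes from the Cheeger--Colding theory of tangent cones, while the \emph{constancy} of the dimension (a single $k$ in (2), a single $l$ along a geodesic in (1)) is the genuinely deep Colding--Naber input.

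For (2), I would first recall that by monotonicity of the renormalized volume ratios one has, for $\nu$-almost every $y \in Y$, a well-defined density, and by the volume-cone-implies-metric-cone theorem every tangent cone $T_y$ is then a metric cone. Combining this with the splitting theorem (Theorem 1.2) and an iterated blow-up argument, Cheeger--Colding \cite{B2} show that the regular set $\mathcal{R} = \bigcup_l \mathcal{R}_l$ carries full renormalized measure, i.e. $\nu(\mathcal{S}) = 0$, and that at each regular point the tangent is a \emph{unique} Euclidean $\mathbb{R}^l$. The substantive remaining point is that one integer $k$ suffices, i.e. $\nu(\mathcal{R}_l) = 0$ for every $l \ne k$, which I would obtain from the constancy-of-dimension phenomenon established via part (1).

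For (1) and the constancy, the engine is the Colding--Naber Hölder continuity of tangent cones along the interior of a minimizing geodesic. The plan has three steps. First, I would use the Cheeger--Colding segment inequality to show that for $\nu \times \nu$-almost every pair $(a_1,a_2) \in A_1 \times A_2$ the chosen limit geodesic meets the measure-zero singular set only possibly at its endpoints, so that its interior lies in $\mathcal{R} = \bigcup_l \mathcal{R}_l$; this is where working with \emph{limit} geodesics matters, since the inequality is proved on the manifolds $M_i$ and passed to the limit. Second, I would invoke the sharp Hölder estimate of \cite{B5}, which shows that along such a geodesic the pointed Gromov--Hausdorff type of small balls varies Hölder-continuously in the parameter $t$. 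Third, since the associated dimension function $t \mapsto l(\gamma(t))$ is integer-valued and, by the Hölder estimate, locally constant, it is constant on the connected interior, giving the single $l$ asserted in (1). Feeding this back, the dimension is forced to be a global constant $\nu$-almost everywhere, because $\nu \times \nu$-almost every pair is joined by such a good geodesic of constant interior dimension, yielding the integer $k$ of (2).

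The main obstacle, and the part I would not attempt to reprove but simply cite from \cite{B5}, is the second step: showing that the geometry of balls centered along a minimizing geodesic cannot jump. This rests on Colding--Naber's parabolic regularization of the distance function and their sharp gradient and Hessian estimates for its harmonic approximations, and it is far deeper than the elementary splitting argument used in Lemma 2.1. By contrast, the first and third steps are routine packaging, so the proof of Lemma 2.2 reduces to quoting the Cheeger--Colding structure theorem and the Colding--Naber Hölder estimate and assembling them as above.
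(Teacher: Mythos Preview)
The paper does not prove this lemma at all: it is stated with the attribution ``(\cite{B5})'' and used as a black box, so there is no in-paper proof to compare against. Your proposal therefore goes well beyond what the paper does, correctly identifying that both assertions are the main results of Colding--Naber \cite{B5} and giving a reasonable outline of how they are obtained there (Cheeger--Colding full measure of $\mathcal{R}$, segment inequality, and the H\"older continuity of tangent cones along interiors of limit geodesics).

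One small imprecision in your sketch: the segment inequality by itself does not show that for $\nu\times\nu$-a.e.\ pair the limit geodesic has interior \emph{entirely} contained in $\mathcal{R}$; it only yields that the interior meets $\mathcal{S}$ on an arc-length-null set. It is precisely the H\"older continuity of tangent cones that upgrades ``a.e.\ interior point is regular'' to ``every interior point is regular'' (and simultaneously forces the dimension to be constant). So your steps one and two are not cleanly separable in the way you describe; the H\"older estimate is doing both jobs at once. With that adjustment, your outline matches the logic of \cite{B5}.
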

  Let $\mathcal{WE}_k$ be the set of points, $y\in Y$, such that there exists a tangent cone at $y$ which is isometric to $ \mathbb{R}^k \times W$ and $W$ is a length metric space. Let $\underline{\mathcal{WE}}_k = \mathcal{WE}_k \setminus \mathcal{R}_k$.
\begin{Rem}$ $
\begin{enumerate}
\item[(1)]
 As an essentially direct consequence of Cheeger-Colding's work in \cite{B2,B4}, Honda (\cite{Honda}) pointed out that if $x \in \underline{\mathcal{WE}}_k$, then for all $r>0$, $\nu(B_r(x)\cap (\bigcup_{l\geq k+1}\mathcal{R}_l))>0$. Consequently, if $\nu(\cup_{l\geq k+1}\mathcal{R}_l)=0$, then $\underline{\mathcal{WE}}_k = \emptyset,$ especially, $\mathcal{R}_l = \emptyset,$ for all $l\geq k+1.$
 \item[(2)]
 By Lemma 2.2 (2), almost every pair of points in $Y$ can be connected by a limit geodesic whose
 interior is in $\mathcal{R}_k.$
\end{enumerate}
\end{Rem}
\begin{proof}[Proof of theorem 1.1]
 We complete the proof in two steps.

 First, we claim that for $x\in \mathcal{R}_1$, there exists $\tau=\tau(x)>0,$ such that $B_{\tau}(x)$ is isometric to $(-\tau,\tau).$

 Let $x\in \mathcal{R}_1, x_r^{+}, x_r^{-}, \epsilon_0, r(\epsilon)$ be chosen as in Lemma 2.1. Then for any $$(y^{+},y^{-})\in B_{\epsilon r}(x_r^{+})\times B_{\epsilon r}(x_r^{-}),$$ where $r\leq r(\epsilon)$ and $\epsilon\leq \epsilon_0\leq \frac{1}{4},$ any geodesic, $\gamma$, from $y^{+}$ to $y^{-}$, we have that $x\in \gamma$. By Lemma 2.2(1), we can assume $(y^{+},y^{-})\in B_{\epsilon r}(x_r^{+})\times B_{\epsilon r}(x_r^{-})$ and a limit geodesic $\gamma$
 from $y^{+}$ to $y^{-}$ whose interior is contained in $\mathcal{R}_l$ for some integer $l$. Since $x\in \gamma\cap \mathcal{R}_1$,
 we have that $l=1$ i.e. the interior of $\gamma$ is contained in $\mathcal{R}_1$.

 Next, we shall show that for $\tau\leq \frac{r(\epsilon_0)}{4}$, $B_\tau(x)$ is isometric to $\gamma|_{(-\tau,\tau)}$, where $\gamma$ is
 in above with a reparametrization such that $x=\gamma(0)$. If $B_\tau(x)$ is not isometric to $\gamma|_{(-\tau,\tau)}$, then there is
 a point $y \in B_{\tau}(x)\setminus \gamma$. We may assume $z\in \gamma$ such that $d(y,z)= d(y,\gamma)$. Observe that
 for every small $\delta>0$, there are $z^{\pm}\in \gamma$ such that $d(z^-,z^+)=d(z^-,z)+d(z,z^+)$ and $d(z^\pm,z)=\delta$,
 and $w$ in a geodesic from $z$ to $y$ such that $d(w,z)=\delta$ (note that $d(z^{\pm},w)\geq \delta$).
 Consequently, the tangent cone at $z$ associate to $\delta\to 0$ is isometric to $\mathbb{R}\times W$ and
 $W$ is not a point; a contradiction to $z\in \mathcal{R}_1$ (compare to the proof of Theorem 4.3, \cite{Honda}).

 A by-product of the above is that geodesics in $Y$ do not branch. By Lemma 2.2(2) and Remark 2.1(1),
 $\underline{\mathcal{WE}}_1 =\emptyset$. Consequently, any interior point $y$ in a geodesic has a
 a neighborhood isometric to $(-\tau,\tau)$ (no branching at $y$). This allow us to uniquely extend
 any geodesic in $Y$ to a maximal geodesic.

 Secondly, we will show that $Y\setminus \gamma = \emptyset$ i.e., $Y$ is a one dimensional topological manifold.
 Arguing by contradiction, assume $y\in Y\setminus \gamma$ and $z\in \gamma$ such that $d(y,z)=d(y,\gamma)$.
 If $z$ is an interior point of $\gamma$ and $\sigma$ is a geodesic from $z$ to $y$, then
 $z\in \mathcal{R}_1$ and there exists $\delta>0$, such that $B_{\delta}(z)$ is isometric to $(-\delta, \delta)$. Thus $\sigma(\delta)\in \gamma$ and $d(y,\sigma(\delta))\leq d(y,z)-\delta$, a contradiction to that $d(y,z)=d(y,\gamma)$. If $z$ is one end of $\gamma$, then
 a similar discussion yields that for any $w$ lies in the interior of $\gamma$, a geodesic from $w$ to $y$ is
 the union of the piece of $\gamma$ from $w$ to $z$ and a geodesic from $z$ to $y$ i.e., $\gamma$ extends through $z$,
 a contradiction.
\end{proof}

\begin{proof}[Proof of Corollary 1.1.2]
Let $x, y\in Y$, and $\gamma$ be a limit geodesic connecting $x$ and $y$. Let $p$ be an interior point of $\gamma$, then $p\in \mathcal{WE}_1$.
Since $\nu(Y\setminus \mathcal{R}_2)= 0$, by Remark 2.2(1) and Theorem 1.1, for any $i\neq 2$, $\mathcal{R}_i=\emptyset$. If $p \in \mathcal{S}$, there is a tangent cone, $T_p = \mathbb{R}^1\times W$, at $p$ satisfies that $\mathrm{dim}_H(W)\geq 1$ (otherwise $p \in \mathcal{R}_1$ \cite{Honda}). Thus by Proposition 3.37 in \cite{B8} and Theorem 1.1, for any renormalized measure $\nu_{\infty}$ in $T_p$, $\nu_{\infty}(T_p\setminus \mathcal{R}_2)= 0$. By splitting theorem (Theorem 1.2), using an argument as the proof of Theorem 1.1 (see also Theorem 4.3 \cite{Honda} ), we have that $T_p$ is isometric to the half plan or $\mathbb{R}^2$. Then by the sharp H\"older continuity on tangent cones obtained by Colding-Naber (\cite{B5}), we easily conclude the proof.

\end{proof}

\end{document}